\documentclass[12pt,twoside]{article}
\setlength{\textheight}{43pc} 
\setlength{\textwidth}{28pc}
\setlength{\parindent}{2em}
\setlength{\oddsidemargin}{2mm}
\setlength{\evensidemargin}{2mm}

\usepackage{paralist}
\usepackage{marginnote}
\usepackage{tikz-cd}
\usepackage{amsfonts}
\usepackage{ntheorem}
\usepackage{amsrefs}
\usepackage{amssymb}
\usepackage{amsmath}
\usepackage{graphicx}
\usepackage{varwidth}
\usepackage{hyperref}
\usepackage{fancyhdr}
\usepackage{stmaryrd}
\usepackage{multirow}
\usepackage{blkarray}
\usepackage[autostyle]{csquotes}
\usepackage[mathscr]{euscript}
\hypersetup{
    colorlinks=true,
    linkcolor=black,
    filecolor=magenta,      
    urlcolor=red,
    citecolor=black
}
\DeclareMathAlphabet{\mathcalligra}{T1}{calligra}{c}{h}
\setcounter{MaxMatrixCols}{30}
\providecommand{\U}[1]{\protect\rule{.1in}{.1in}}
\setlength{\textheight}{21,2cm}
\textwidth 160mm
\setlength{\oddsidemargin}{1em}
\newtheorem{theorem}{Theorem}
\newtheorem{proposition}[theorem]{Proposition}
\newtheorem{corollary}[theorem]{Corollary}
\newtheorem{lemma}[theorem]{Lemma}

\newtheorem*{question}{Question}
\newtheorem{remark}[theorem]{Remark}

\newcommand{\Ric}{\mathrm{Ric}}
\newcommand{\tr}{\mathrm{trace}}
\newcommand{\scal}{\mathrm{scal}}
\newcommand{\R}{\mathbb{R}}

\newcommand{\Z}{\mathbb{Z}}
\newcommand{\F}{\mathbb{F}}
\newcommand{\Vol}{\mathrm{Vol}}

\newcommand{\Hom}{\mathrm{Hom}}
\newcommand\Sp{\mathbb{S}}
\newcommand\Sh{\mathcal{L}}

\newcommand\bea{\begin{eqnarray*}}
\newcommand\eea{\end{eqnarray*}}
\newcommand\be{\begin{equation}}
\newcommand\ee{\end{equation}}

\newcommand{\mb}{\mathbb}

\newcommand{\po}{{\hspace*{-1ex}}{\bf .  }}

\def\<{\langle}
\def\>{\rangle}
\newcommand\qed{\ifhmode\unskip\nobreak\fi\ifmmode\ifinner\else
\hskip5 pt \fi\fi\hbox{\hskip5 pt \vrule width4 pt height6 pt
depth1.5 pt \hskip 1pt }}

\pagestyle{fancy}
\fancyhead[R,L]{\small}
\fancyhead[CO]{ \textsc{christos-raent onti and theodoros vlachos}}
\fancyhead[CE]{ \textsc{almost conformally flat hypersurfaces}}
\fancyfoot[C]{\thepage}

\begin{document}

\title{Almost conformally flat hypersurfaces}
\author{Christos-Raent Onti and Theodoros Vlachos}
\date{}

\maketitle
\renewcommand{\thefootnote}{\fnsymbol{footnote}} 
\footnotetext{\emph{2010 Mathematics Subject Classification.} Primary 53C40, 53C20; Secondary 53C42.}     
\renewcommand{\thefootnote}{\arabic{footnote}} 

\renewcommand{\thefootnote}{\fnsymbol{footnote}} 
\footnotetext{\emph{Key Words and Phrases.} Weyl tensor, $L^{n/2}$-norm of Weyl tensor, Betti numbers, conformal immersions.}     
\renewcommand{\thefootnote}{\arabic{footnote}}

\begin{abstract}
We prove a universal lower bound for the $L^{n/2}$-norm of the 
Weyl tensor in terms of the Betti numbers for compact $n$-dimensional 
Riemannian manifolds that are conformally immersed as hypersurfaces in
the Euclidean space. As a consequence, we 
determine the homology of almost conformally flat hypersurfaces. Furthermore, 
we provide a necessary condition for a compact Riemannian manifold to admit 
an isometric minimal immersion as a hypersurface in the round sphere and extend a
result due to Shiohama and Xu \cite{SX} for compact hypersurfaces in any space form.
\end{abstract}

\section{Introduction}

The investigation of curvature and topology of Riemannian manifolds 
or submanifolds is one of the basic problems in global differential geometry. 
The sphere theorem is an important result in this direction, in the 
framework of Riemannian geometry. 

In the realm of conformal geometry, the fundamental tensor is the Weyl tensor 
and his role is similar to the one of the curvature tensor in Riemannian geometry.
Several authors have worked on the question how certain conditions on the Weyl tensor affect
the geometry and the topology of Riemannian manifolds (cf. \cite{Catino,DuNo}).
Schouten's theorem asserts that the vanishing of the Weyl tensor of a Riemannian $n$-manifold $M^n$
is equivalent to the fact that $M^n$ is conformally flat, i.e., locally is conformally diffeomorphic 
to an open subset of the Euclidean space $\R^{n}$, with the canonical metric, if $n\geq 4$.
The $L^{n/2}$-norm of the Weyl tensor, which is a
conformal invariant, measures how far a compact Riemannian manifold deviates 
from being conformally flat. There are plenty of papers that investigate the effect of restrictions on
the $L^{n/2}$-norm of the Weyl tensor to both geometric and topological properties 
(cf. \cite{Singer,Gursky,ItSa,ABKS,SeH,Listing}).

Cartan \cite{ECartan} initiated the investigation of conformally flat manifolds from the submanifold point
of view. Moore \cite{JDM2} and later do Carmo, Dajczer and Mercuri \cite{MMF} studied 
conformally flat submanifolds of the Euclidean space (see also \cite{Pinkall,DaFlo}). 
In particular, in \cite{MMF} the authors studied the case of hypersurfaces in $\R^{n+1}$ and they proved that diffeomorphically 
every such hypersurface $M^n$ is a sphere $\Sp^n$ with $\beta_1(M^n;\Z)$ handles attached, 
where $\beta_1(M^n;\Z)$ is the first Betti number of $M^n$. Moreover, they showed that 
geometrically every such hypersurface locally consists of nonumbilic submanifolds of $\R^{n+1}$ 
that are foliated by complete round $(n-1)$-spheres and are joined through their boundaries to 
the following three types of umbilic submanifolds of $\R^{n+1}$: $(i)$ an open piece of an 
$n$-sphere or an $n$-plane bounded by round $(n-1)$-sphere, $(ii)$ a round $(n-1)$-sphere, 
$(iii)$ a point.

\medskip

It is therefore natural to address the following
  
\begin{question}\po
Let $(M^n,g)$ be a compact $n$-dimensional Riemannian manifold that admits 
a conformal immersion in $\R^{n+1}$. What can be said about
the topology of $M^n$ if $(M^n,g)$ is almost conformally flat, in the sense that 
the $L^{n/2}$-norm of the Weyl tensor is sufficiently small?
\end{question}

The aim of the paper is to provide an answer to the above question. In particular, we give 
a universal lower bound for the $L^{n/2}$-norm of the Weyl tensor in terms of the Betti numbers 
for compact $n$-dimensional Riemannian manifolds that are conformally immersed in $\R^{n+1}$. 
As a consequence, we are able to determine the homology of compact almost conformally flat 
hypersurfaces. 
Throughout the paper, all manifolds under consideration are assumed to be without boundary, 
connected and oriented. Our main result is stated as follows.

\begin{theorem}\po\label{main}
Given $n\geq 4$, there exists a positive constant $c(n)$, depending only on $n$, 
such that if $(M^n,g)$ is a compact $n$-dimensional Riemannian manifold that 
admits a conformal immersion in $\mb{R}^{n+1}$, then 
the Weyl tensor associated to $g$ satisfies
\be\label{W1}
\int_{M^n}\Vert \mathcal{W}_g \Vert^{n/2}\ dM_g \geq c(n)\sum_{i=2}^{n-2}\beta_i(M^n;\F),
\ee
where $\beta_i(M^n;\F)$ is the $i$-th Betti number of $M^n$ over an arbitrary coefficient 
field $\mathbb{F}$. In particular, if 
\be\label{W4}
\int_{M^n}\Vert \mathcal{W}_g \Vert^{n/2}\ dM_g<c(n)
\ee
then $M^n$ has the homotopy type of a CW-complex with no cells of dimension $i$ for 
$2\leq i\leq n-2$ and the fundamental group $\pi_1(M^n)$ is a free group on $\beta_1(M^n;\Z)$ 
generators. Moreover, if $\pi_1(M^n)$ is finite then $M^n$ is homeomorphic to the 
sphere $\Sp^n$.
\end{theorem}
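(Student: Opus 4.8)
The plan is to exploit the conformal invariance of $\int_{M^n}\Vert\mathcal{W}_g\Vert^{n/2}\,dM_g$. Since $(M^n,g)$ admits a conformal immersion into $\R^{n+1}$, there is a metric $\tilde g=e^{2\varphi}g$ in the conformal class of $g$ induced by an isometric immersion $f\colon(M^n,\tilde g)\to\R^{n+1}$, and the $L^{n/2}$-norm of the Weyl tensor is unchanged under the conformal change, so it suffices to prove (\ref{W1}) for a compact oriented hypersurface $f$. First I would bring in Morse theory through the family of height functions $h_v=\langle f,v\rangle$, $v\in\Sp^n$. The function $h_v$ is critical at $p$ exactly when $v=\pm\nu(p)$, where $\nu$ is the Gauss map, and its Hessian there is $\pm$ the second fundamental form; hence for a regular value $v$ of $\nu$ (a full-measure set, by Sard) $h_v$ is Morse, its index-$i$ critical points being the points $p$ with $\nu(p)=v$ and exactly $i$ negative principal curvatures together with those with $\nu(p)=-v$ and exactly $i$ positive principal curvatures.

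Combining the weak Morse inequalities $\beta_i(M^n;\F)\le\mu_i(v)$ with the area formula for $\nu$, whose Jacobian is the Gauss--Kronecker curvature $K=\lambda_1\cdots\lambda_n$, integration over $\Sp^n$ gives $\Vol(\Sp^n)\beta_i\le\int_{\{\#\{\lambda<0\}=i\}}|K|\,dM+\int_{\{\#\{\lambda<0\}=n-i\}}|K|\,dM$. Summing over $2\le i\le n-2$ collapses the right-hand side to $2\int_{U}|K|\,dM$, where $U$ is the set of points at which the shape operator has at least two negative and at least two positive principal curvatures. What then remains is the pointwise estimate $|K|\le C(n)\Vert\mathcal{W}\Vert^{n/2}$ on $U$. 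I would obtain it from the Gauss equation, which writes $\mathcal{W}$ as an explicit quadratic polynomial in $\lambda_1,\dots,\lambda_n$ (the trace-free part of the curvature tensor); both $|K|$ and $\Vert\mathcal{W}\Vert^{n/2}$ are homogeneous of degree $n$, so the inequality is scale invariant and may be tested on $\{\sum\lambda_i^2=1\}$. By compactness it reduces to showing that $\Vert\mathcal{W}\Vert$ cannot vanish at a point of $\overline U$ where $K\ne0$: if $\mathcal{W}=0$ with all $\lambda_i\ne0$, then by the classical characterization of conformally flat hypersurfaces for $n\ge4$ at least $n-1$ principal curvatures coincide and are nonzero, hence share one sign with at most one $\lambda_i$ of the opposite sign, contradicting membership in $\overline U\cap\{K\ne0\}$. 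This yields (\ref{W1}) with $c(n)=\Vol(\Sp^n)/(2C(n))$.

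For the topological statements the decisive point is to read the constant back through the averaged inequality: if $\int_{M^n}\Vert\mathcal{W}_g\Vert^{n/2}\,dM_g<c(n)$, then $\int_{\Sp^n}\sum_{i=2}^{n-2}\mu_i(v)\,dv<\Vol(\Sp^n)$, so the integer-valued function $\sum_{i=2}^{n-2}\mu_i$ has average less than $1$ and must vanish for some regular value $v_0$. Thus $h_{v_0}$ is a Morse function with no critical points of index $2,\dots,n-2$, and $M^n$ has the homotopy type of a CW-complex with cells only in dimensions $0,1,n-1,n$. Since $n\ge4$ the $2$-skeleton is a graph, so $\pi_1(M^n)$ is free; the higher cells, of dimension $\ge n-1\ge3$, leave $H_1$ unchanged, whence $H_1(M^n;\Z)$ is free abelian and $\pi_1(M^n)$ is free on $\beta_1(M^n;\Z)$ generators. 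Finally, if $\pi_1(M^n)$ is finite it is trivial, so $M^n$ is simply connected; then $H_1=0$ and, by Poincar\'e duality, $H_{n-1}\cong H^1=0$, so $M^n$ is a homotopy sphere and hence homeomorphic to $\Sp^n$ by the solution of the topological Poincar\'e conjecture in dimensions $n\ge4$.

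The main obstacle is the pointwise inequality on $U$: the rest is bookkeeping with Morse theory and the area formula, but upgrading \emph{the Weyl tensor is nonzero off the conformally flat locus} to the uniform bound $|K|\le C(n)\Vert\mathcal{W}\Vert^{n/2}$ requires the explicit Gauss-equation form of $\mathcal{W}$, the sharp characterization of conformal flatness for hypersurfaces, and some care at the degenerate configurations on $\partial U$ and where $K=0$, which are harmless precisely because the integrand vanishes there.
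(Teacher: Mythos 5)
Your overall architecture coincides with the paper's: reduce by conformal invariance to an isometric hypersurface immersion, average the weak Morse inequalities for the height functions $h_v$ over $\Sp^n$ (your Gauss-map area formula is exactly the codimension-one case of the Shiohama--Xu lemma (\ref{ShXu}) that the paper invokes, and your factor $2$ matches the paper's identity $\int_{M^n}\Vert\mathcal{W}_{\tilde g}\Vert^{n/2}\,dM_{\tilde g}=\frac{1}{2}\int_{UN_f}\Vert\mathcal{W}_{\tilde g}\Vert^{n/2}\,d\Sigma$), and finish with the same Morse-theoretic, cellular-approximation and Poincar\'e-conjecture endgame. The one place where you diverge from the paper is the only genuinely hard step, and there your argument has a real gap.

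The pointwise inequality $|K|\le C(n)\Vert\mathcal{W}\Vert^{n/2}$ on $U$ does \emph{not} follow from compactness of $\overline U\cap\{\sum_i\lambda_i^2=1\}$ together with the fact that $\mathcal{W}\neq 0$ at points of $\overline U$ where $K\neq 0$. Two nonnegative continuous functions can have the zero set of one contained in that of the other while their ratio is unbounded (compare $x$ with $x^2$ near $0$), and here the two functions really do vanish simultaneously on the normalized set: by the Cartan-type characterization, $\Vert\mathcal{W}\Vert=0$ at a point of $\overline U\cap\Sp^{n-1}$ forces, up to permutation and sign, the eigenvalue configuration $(\pm 1,0,\dots,0)$, at which $K=0$ as well. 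A sequence $x_m\in U\cap\Sp^{n-1}$ approaching such a configuration could a priori have $|K(x_m)|/\Vert\mathcal{W}(x_m)\Vert^{n/2}\to\infty$, so your parenthetical that the degenerate configurations are ``harmless precisely because the integrand vanishes there'' is exactly backwards: simultaneous vanishing at the limit is what threatens the uniform constant, and it is where all the work lies. The paper's Proposition \ref{mainlemma} is devoted to closing precisely this hole, by a blow-up analysis of minimizing sequences on the level set $\{|\psi|=1\}$: writing $a_m=(a_{m,1},\eta_m\theta_m)$ near $(\pm1,0,\dots,0)$, it establishes $\phi(z_m)\geq 4\rho_m^4\eta_m^2\delta_m$ with $\rho_m^4\eta_m^2\to\infty$, and the membership $a_m\in U_n$ (at least two eigenvalues of each sign) forces the transverse limit $\bar\theta$ not to have all coordinates equal, whence $\lim_m\delta_m>0$. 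This quantitative lower bound near the degenerate configurations is what produces $\varepsilon(n)$; to repair your proof you must supply this (or an equivalent) local estimate, since the soft reduction ``compactness plus nonvanishing of $\mathcal{W}$ off $\{K=0\}$'' is insufficient.
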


Thus, in the case where (\ref{W4}) is satisfied, the homology groups of $M^n$ must 
satisfy the condition $H_i(M^n;\F)=0$ for all $2\leq i\leq n-2$, where $\F$ is any coefficient field. 

The assumption on the codimension in Theorem \ref{main} is essential, as the following 
example shows. We consider the manifold 
$M^n=\Sp^{1}(1) \times\Sp^{1}(1)\times\Sp^{n-2}(r),\ n\geq 4,$ equipped with the product metric 
$g$, where $\Sp^{n-2}(r)$ is the $(n-2)$-dimensional round sphere of radius $r$. Since $M^n$ 
is  isometrically immersed into the sphere $\Sp^{n+2}(\sqrt{2+r^2})$ in the obvious way, it follows 
that $M^n$ admits a conformal immersion into $\mathbb{R}^{n+2}$. A long but straightforward 
computation yields that 
$$\int_{M^n}\Vert \mathcal{W}_g \Vert^{n/2}\ dM_g=\frac{a(n)}{r^2},$$ where $a(n)$ is a positive 
constant depending only on $n$. Since the second Betti number of $M^n$ is non-zero, 
over an arbitrary coefficient field $\F$, it follows that $M^n$ does not satisfy inequality (\ref{W1}) for 
$r$ large enough.

\begin{remark}\po\label{rem}\rm{
In Theorem \ref{main}, the ambient space $\R^{n+1}$ can be replaced by the round sphere $\Sp^{n+1}$ or the hyperbolic 
space $\mathbb{H}^{n+1}$. Indeed, this follows from the fact that $\Sp^{n+1}\smallsetminus\{\text{point}\}$ and 
$\mathbb{H}^{n+1}$ are conformally equivalent to the Euclidean space and  that the 
$L^{n/2}$-norm of the Weyl tensor is conformally invariant.
}
\end{remark}

Chern and Simons provided in \cite[Theorem 6.4, p. 65]{ChSim} a necessary condition for a compact $3$-dimensional 
Riemannian manifold to admit a conformal immersion into $\R^4$.
Theorem \ref{main} allows us to give another such condition.

\begin{corollary}\po
Let $(M^n,g),\ n\geq 4,$ be a compact $n$-dimensional Riemannian manifold. A necessary condition
that $M^n$ admit a conformal immersion in $\mb{R}^{n+1}$ is inequality (\ref{W1}).
\end{corollary}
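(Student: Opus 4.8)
The plan is to recognize that this corollary is nothing more than a logical reformulation of Theorem \ref{main}, so the entire mathematical content has already been established and no fresh argument is required. First I would recall the precise logical shape of Theorem \ref{main}: under the standing hypotheses that $M^n$ is compact, connected, oriented, and of dimension $n\geq 4$, the theorem asserts the implication
\[
M^n \text{ admits a conformal immersion in } \R^{n+1} \ \Longrightarrow \ \text{inequality } (\ref{W1}) \text{ holds}.
\]

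Next I would invoke the definition of a necessary condition: a statement $Q$ is a necessary condition for a property $P$ exactly when $P\Rightarrow Q$. Setting $P$ to be the property of admitting a conformal immersion in $\R^{n+1}$ and $Q$ to be inequality (\ref{W1}), the displayed implication is precisely the statement that $Q$ is necessary for $P$, which is the assertion of the corollary. Equivalently, one could argue by contrapositive: if (\ref{W1}) fails, then by Theorem \ref{main} the manifold cannot admit a conformal immersion in $\R^{n+1}$, which is again the same content.

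The only point requiring a moment's care is to verify that the hypotheses align: the corollary assumes $(M^n,g)$ is compact with $n\geq 4$, while the global convention fixed in the introduction supplies connectedness and orientability, so the hypotheses of Theorem \ref{main} are met verbatim. I do not anticipate any genuine obstacle here, since the substantive work---deriving the Betti-number bound from the existence of the conformal immersion---is carried out entirely in the proof of Theorem \ref{main}, and the corollary merely repackages that implication in the language of necessary conditions.
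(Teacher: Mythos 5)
Your proposal is correct and matches the paper exactly: the paper offers no separate proof, treating the corollary as the immediate logical restatement of Theorem \ref{main} (existence of a conformal immersion in $\R^{n+1}$ implies inequality (\ref{W1}), i.e., (\ref{W1}) is necessary). Your check that the standing conventions of connectedness and orientability supply the remaining hypotheses is the only point of substance, and you handled it correctly.
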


There is an abundance of Riemannian manifolds that do not satisfy inequality (\ref{W1}), and therefore
do not admit a conformal immersion as hypersurfaces in $\R^{n+1}$. For 
instance, let $M^n=N^{m}\times\mb{S}^{n-m}(r),\  \ 2\leq m\leq n-2,$ equipped with the 
product metric $g$, where $N^{m}$ is a compact flat $m$-dimensional Riemannian manifold. 
A long but straightforward computation yields that 
$$\int_{M^n}\Vert \mathcal{W}_g \Vert^{n/2}\ dM_g=\frac{a(n,m)}{r^m}\Vol(N^m),$$ where $a(n,m)$ is 
a positive constant depending only on $n$ and $m$. Since, the $(n-m)$-th Betti number of $M^n$ is non-zero, over an 
arbitrary coefficient field $\F$, we obtain that $M^n$ does not satisfy (\ref{W1}) for $r$ large enough.

\smallskip

Using Theorem \ref{main}, we are able to prove the following result for compact minimal hypersurfaces in spheres.

\begin{theorem}\po\label{MHS}
Given $n\geq 4$, there exists a positive constant $c_1(n)$, depending only on $n$, such that if 
$(M^n,g)$ is a compact $n$-dimensional Riemannian manifold that admits an isometric 
minimal immersion into the unit $(n+1)$-dimensional sphere $\Sp^{n+1}$, then
$$
\int_{M^n} \Vert \Ric_g-(n-1) g\Vert^{n/2}\ dM_g\geq c_1(n)\sum_{i=2}^{n-2}\beta_i(M^n;\mathbb{F}),
$$
where $\Ric_g$ is the Ricci tensor of $(M^n,g)$. In particular, if 
$$
\int_{M^n} \Vert \Ric_g-(n-1) g \Vert^{n/2} \  dM_g<c_1(n),
$$
then $M^n$ has the homotopy type of a CW-complex with no cells of dimension $i$ for 
$2\leq i\leq n-2$ and the fundamental group $\pi_1(M^n)$ is a free group on $\beta_1(M^n;\Z)$ 
elements. Moreover, if $\pi_1(M^n)$ is finite then $M^n$ is homeomorphic to the 
sphere $\Sp^n$.

\end{theorem}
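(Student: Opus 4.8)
The plan is to derive Theorem \ref{MHS} from Theorem \ref{main} by means of a pointwise comparison between the Weyl tensor and the trace-free quantity $\Ric_g-(n-1)g$. The starting observation is that a minimal hypersurface in $\Sp^{n+1}$ is, in particular, conformally (indeed isometrically) immersed in $\Sp^{n+1}$. Hence Theorem \ref{main}, in the form given by Remark \ref{rem}, applies directly and yields
\[
\int_{M^n}\Vert \mathcal{W}_g\Vert^{n/2}\,dM_g\geq c(n)\sum_{i=2}^{n-2}\beta_i(M^n;\F).
\]
It therefore suffices to bound the left-hand side from above by a constant multiple of the integral of $\Vert \Ric_g-(n-1)g\Vert^{n/2}$, and this I would obtain from a pointwise estimate.

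Let $A$ denote the shape operator of the immersion into $\Sp^{n+1}$. By the Gauss equation, the curvature tensor of $M^n$ equals that of the constant-curvature ambient sphere plus a term quadratic in $A$; since the immersion is minimal, $\tr A=0$, and a direct computation gives
\[
\Ric_g-(n-1)g=-A^2,\qquad \scal_g=n(n-1)-\Vert A\Vert^2.
\]
Substituting these relations into the definition of the Weyl tensor, every contribution of the ambient (conformally flat) curvature cancels, because the round sphere has vanishing Weyl tensor, and $\mathcal{W}_g$ is thereby expressed as a universal quadratic expression in $A$ alone. In particular there is a constant $C(n)$, finite for every $n\geq 4$, with $\Vert \mathcal{W}_g\Vert\leq C(n)\Vert A\Vert^2$ at each point.

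The comparison is then completed by pure linear algebra. Writing $\lambda_1,\dots,\lambda_n$ for the eigenvalues of the symmetric operator $A$, one has $\Vert A\Vert^2=\tr(A^2)=\sum_i\lambda_i^2$ and $\Vert A^2\Vert=(\sum_i\lambda_i^4)^{1/2}$, so the Cauchy--Schwarz inequality gives $\Vert A\Vert^2\leq\sqrt{n}\,\Vert A^2\Vert$. Combining this with the previous step and the identity $\Vert \Ric_g-(n-1)g\Vert=\Vert A^2\Vert$ produces the pointwise estimate
\[
\Vert \mathcal{W}_g\Vert\leq c_2(n)\,\Vert \Ric_g-(n-1)g\Vert,\qquad c_2(n)=\sqrt{n}\,C(n).
\]
Raising to the power $n/2$, integrating over $M^n$, and feeding the result into the inequality from Theorem \ref{main} gives the asserted lower bound with $c_1(n)=c(n)/c_2(n)^{n/2}$. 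The homotopy-type and fundamental-group conclusions, together with the homeomorphism with $\Sp^n$ when $\pi_1(M^n)$ is finite, follow at once: if the integral of $\Vert \Ric_g-(n-1)g\Vert^{n/2}$ is smaller than $c_1(n)$, then by the same pointwise estimate the integral of $\Vert \mathcal{W}_g\Vert^{n/2}$ is smaller than $c(n)$, so the corresponding statements of Theorem \ref{main} apply verbatim. I expect the main obstacle to be the second step: one must compute the Weyl tensor of a minimal hypersurface of the sphere explicitly in terms of the shape operator and verify that the constant in $\Vert \mathcal{W}_g\Vert\leq C(n)\Vert A\Vert^2$ is finite and depends only on $n$. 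This is precisely where the minimality hypothesis is essential, since for a non-minimal hypersurface $\Ric_g-(n-1)g=(\tr A)A-A^2$ acquires a term linear in $A$ and the clean comparison between $\Vert \mathcal{W}_g\Vert$ and $\Vert \Ric_g-(n-1)g\Vert$ breaks down.
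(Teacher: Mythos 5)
Your proposal is correct and follows essentially the same route as the paper: both reduce Theorem \ref{MHS} to Theorem \ref{main} (via stereographic projection and conformal invariance of the $L^{n/2}$-norm of the Weyl tensor) through the pointwise bound $\Vert\mathcal{W}_g\Vert\leq c_2(n)\Vert\Ric_g-(n-1)g\Vert$, obtained from the Gauss equation, minimality (so $\Ric_g-(n-1)g=-A^2$), and the Cauchy--Schwarz inequality $\Vert A\Vert^4\leq n\Vert A^2\Vert^2$. The only cosmetic difference is that the paper derives the exact identity $\Vert\mathcal{W}_g\Vert^2=\gamma(n)\Vert A\Vert^4-\delta(n)\Vert\Ric_g-(n-1)g\Vert^2$ in terms of the principal curvatures, whereas you bypass this computation with the cruder (but perfectly adequate) estimate $\Vert\mathcal{W}_g\Vert\leq C(n)\Vert A\Vert^2$, coming from the fact that the Weyl projection kills the constant-curvature term $\frac{1}{2}\,g\varowedge g$ in the Gauss equation.
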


As an immediate application of Theorem \ref{MHS}, we obtain an obstruction for  a Riemannian metric 
to be realized on a compact minimal hypersurface of the sphere.

\begin{corollary}\po
A compact $n$-dimensional Riemannian manifold $(M^n,g),\ n\geq 4,$ that satisfies
$$
\int_{M^n} \Vert \Ric_g-(n-1) g\Vert^{n/2}\ dM_g < c_1(n)\sum_{i=2}^{n-2}\beta_i(M^n;\mathbb{F}),
$$
cannot admit an isometric minimal immersion into the unit sphere $\Sp^{n+1}$.
\end{corollary}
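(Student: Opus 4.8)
The plan is to derive this corollary directly from Theorem \ref{MHS} by contraposition. Suppose, for contradiction, that a compact $n$-dimensional Riemannian manifold $(M^n,g)$ with $n\geq 4$ satisfies the strict inequality
$$
\int_{M^n}\Vert\Ric_g-(n-1)g\Vert^{n/2}\ dM_g<c_1(n)\sum_{i=2}^{n-2}\beta_i(M^n;\F),
$$
yet nevertheless admits an isometric minimal immersion into the unit sphere $\Sp^{n+1}$. I want to show these two assumptions are incompatible, so that no such immersion can exist.

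First I would invoke Theorem \ref{MHS}. Since by assumption $(M^n,g)$ does admit an isometric minimal immersion into $\Sp^{n+1}$, the hypotheses of that theorem are met with the very same constant $c_1(n)$, and we obtain the universal lower bound
$$
\int_{M^n}\Vert\Ric_g-(n-1)g\Vert^{n/2}\ dM_g\geq c_1(n)\sum_{i=2}^{n-2}\beta_i(M^n;\F).
$$
This already contradicts the standing assumption, which asserts the reverse strict inequality. Hence the supposed minimal immersion cannot exist, and the corollary follows. The coefficient field $\F$ may be taken to be the same one appearing in the hypothesis, and the argument is valid for every choice of $\F$.

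I expect essentially no obstacle here: the statement is a formal contrapositive of Theorem \ref{MHS}, and the only point requiring a word of care is that the constant $c_1(n)$ in the hypothesis is literally the one furnished by Theorem \ref{MHS}, so that the two inequalities refer to the same quantity and can be directly compared. One should note that the corollary is nonvacuous precisely when the Betti-number sum $\sum_{i=2}^{n-2}\beta_i(M^n;\F)$ is positive for some field $\F$; when this sum vanishes the hypothesis cannot be satisfied and there is nothing to prove. This mirrors the way the product examples $N^m\times\Sp^{n-m}(r)$ discussed after Theorem \ref{main} furnish concrete manifolds violating the analogous hypersurface bound, and one could similarly exhibit explicit metrics realizing the obstruction here.
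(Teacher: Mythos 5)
Your proposal is correct and matches the paper's treatment: the corollary is stated there as an immediate consequence of Theorem \ref{MHS}, and your contrapositive argument is exactly that deduction, with the only care point (that $c_1(n)$ is literally the constant from Theorem \ref{MHS}) rightly noted. Nothing further is needed.
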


Shiohama and Xu \cite{SX} gave a lower bound in terms of the Betti numbers for the $L^{n/2}$-norm 
of the $(0,4)$-tensor $R_g-\big(\scal_g/n(n-1)\big)R_1$ of compact hypersurfaces in $\R^{n+1}$.
Here, $R_g$ and $\scal_g$ denote the $(0,4)$-curvature 
tensor and the scalar curvature of the induced metric $g$ respectively and $R_1=(1/2)g\varowedge g$, where $\varowedge$ 
stands for the Kulkarni-Nomizu product. The $L^{n/2}$-norm of this tensor 
measures how far the Riemannian manifold deviates from being a space form.
Their proof strongly uses the fact that the ambient space is the Euclidean one. 
Using Theorem \ref{main}, we extend their result for compact 
hypersurfaces in spheres or in the hyperbolic space.

\begin{theorem}\po \label{Th3}
If a compact $n$-dimensional Riemannian manifold $(M^n,g),\ n\geq 4,$ admits an isometric immersion
into the sphere $\Sp^{n+1}$ or the hyperbolic space $\mathbb{H}^{n+1}$, then
$$
\int_{M^n} \Big\Vert R_g-\frac{\scal_g}{n(n-1)}R_1\Big\Vert^{n/2}\ dM_g\geq c(n)\sum_{i=2}^{n-2}\beta_i(M^n;\mathbb{F}),
$$
where $c(n)$ is the constant that appears in Theorem \ref{main}.
In particular, if 
$$
\int_{M^n} \Big\Vert R_g-\frac{\scal_g}{n(n-1)}R_1 \Big\Vert^{n/2} \  dM_g<c(n),
$$
then $M^n$ has the homotopy type of a CW-complex with no cells of dimension $i$ for 
$2\leq i\leq n-2$ and the fundamental group $\pi_1(M^n)$ is a free group on $\beta_1(M^n;\Z)$ 
elements. Moreover, if $\pi_1(M^n)$ is finite then $M^n$ is homeomorphic to the 
sphere $\Sp^n$.
\end{theorem}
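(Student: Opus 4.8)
The plan is to deduce Theorem \ref{Th3} from Theorem \ref{main} by comparing, pointwise, the tensor $R_g-\frac{\scal_g}{n(n-1)}R_1$ with the Weyl tensor $\mathcal{W}_g$. First I would invoke the orthogonal decomposition of the curvature operator into its scalar, traceless-Ricci and Weyl parts, valid on any Riemannian $n$-manifold with $n\geq 3$:
$$
R_g=\frac{\scal_g}{n(n-1)}R_1+\frac{1}{n-2}\,\mathring{\Ric}_g\varowedge g+\mathcal{W}_g,
$$
where $\mathring{\Ric}_g=\Ric_g-\frac{\scal_g}{n}\,g$ and $R_1=\tfrac12\,g\varowedge g$. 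The first term is exactly the scalar summand $\frac{\scal_g}{n(n-1)}R_1$; subtracting it leaves
$$
R_g-\frac{\scal_g}{n(n-1)}R_1=\frac{1}{n-2}\,\mathring{\Ric}_g\varowedge g+\mathcal{W}_g .
$$
Because the traceless-Ricci and Weyl components lie in mutually orthogonal $O(n)$-irreducible summands of the space of algebraic curvature tensors, the Pythagorean identity would give, at each point of $M^n$,
$$
\Big\Vert R_g-\frac{\scal_g}{n(n-1)}R_1\Big\Vert^2=\frac{1}{(n-2)^2}\big\Vert\mathring{\Ric}_g\varowedge g\big\Vert^2+\Vert\mathcal{W}_g\Vert^2\geq\Vert\mathcal{W}_g\Vert^2,
$$
whence the pointwise bound $\big\Vert R_g-\frac{\scal_g}{n(n-1)}R_1\big\Vert\geq\Vert\mathcal{W}_g\Vert$. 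Raising to the power $n/2$ and integrating over $M^n$, I would obtain $\int_{M^n}\big\Vert R_g-\frac{\scal_g}{n(n-1)}R_1\big\Vert^{n/2}dM_g\geq\int_{M^n}\Vert\mathcal{W}_g\Vert^{n/2}dM_g$.

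Next I would bound the latter integral from below. By Remark \ref{rem}, Theorem \ref{main} remains valid with the ambient space $\R^{n+1}$ replaced by $\Sp^{n+1}$ or $\Hy^{n+1}$, since each of these is conformally equivalent to (an open subset of) Euclidean space and the $L^{n/2}$-norm of the Weyl tensor is a conformal invariant; in the spherical case one uses that the compact, $n$-dimensional image of $M^n$ omits at least one point of $\Sp^{n+1}$, so that stereographic projection from such a point is available. As an isometric immersion is in particular conformal, Theorem \ref{main} then yields
$$
\int_{M^n}\Vert\mathcal{W}_g\Vert^{n/2}dM_g\geq c(n)\sum_{i=2}^{n-2}\beta_i(M^n;\F),
$$
and combining this with the previous inequality establishes the stated estimate.

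For the topological part, the hypothesis $\int_{M^n}\big\Vert R_g-\frac{\scal_g}{n(n-1)}R_1\big\Vert^{n/2}dM_g<c(n)$ forces $\int_{M^n}\Vert\mathcal{W}_g\Vert^{n/2}dM_g<c(n)$ through the same pointwise inequality, so the CW structure with no cells in dimensions $2\leq i\leq n-2$, the freeness of $\pi_1(M^n)$, and the homeomorphism with $\Sp^n$ when $\pi_1(M^n)$ is finite would all follow verbatim from the corresponding conclusions of Theorem \ref{main}.

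The one step requiring care—rather than a genuine obstacle—is the orthogonal decomposition: I would need to confirm that the scalar part of $R_g$ carries the coefficient $\scal_g/n(n-1)$ in front of $R_1$ and that the three Singer-Thorpe components are genuinely orthogonal for the tensor norm used in Theorem \ref{main}. This is a routine Kulkarni-Nomizu computation once the conventions are fixed, after which the theorem reduces entirely to the pointwise inequality together with an appeal to Theorem \ref{main} and Remark \ref{rem}; I anticipate no deeper difficulty.
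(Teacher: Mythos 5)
Your proposal is correct and follows essentially the same route as the paper: the pointwise inequality $\big\Vert R_g-\frac{\scal_g}{n(n-1)}R_1\big\Vert\geq\Vert\mathcal{W}_g\Vert$ obtained from the orthogonal decomposition (\ref{Rdec}), followed by an application of Theorem \ref{main} via Remark \ref{rem}. Your only addition is the explicit observation that a compact immersed hypersurface omits a point of $\Sp^{n+1}$ so that stereographic projection is available, a detail the paper leaves implicit in Remark \ref{rem}.
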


We notice that related results have been obtained in \cite{SX1,SX2, Vlachos}.

\section{Algebraic auxiliary results}

This section is devoted to some algebraic results that are crucial for the proofs. 
Let $V$ be a finite dimensional real vector space equipped with a positive definite inner 
product $\langle \cdot,\cdot\rangle$. We denote by $\Hom(V\times V,\mb{R})$ the space 
of all bilinear forms and by $\mathrm{Sym}(V\times V,\mb{R})$ its subspace that consists 
of all symmetric bilinear forms.

The {\it Kulkarni-Nomizu product} of two bilinear forms $\phi,\psi\in \mathrm{Hom}(V\times V,\mb{R})$ 
is the $(0,4)$-tensor $\phi\varowedge \psi\colon V\times V\times V\times V \rightarrow \mb{R}$ defined by 
\bea
\phi\varowedge \psi(x_1,x_2,x_3,x_4) &=& \phi(x_1,x_3) \psi(x_2,x_4)+\phi(x_2,x_4) \psi(x_1,x_3) \\
 & &-\phi(x_1,x_4) \psi(x_2,x_3)-\phi(x_2,x_3)\psi(x_1,x_4).
\eea

Let $W$ be a finite dimensional real vector space equipped with a nondegenerate inner product 
$\langle\cdot ,\cdot \rangle_W$. Using the inner product of $W$, we extend the {\it Kulkarni-Nomizu 
product} to $W$-valued bilinear forms $\beta,\gamma\in\Hom(V\times V,W)$, as the $(0,4)$-tensor 
$\beta\varowedge \gamma\colon V\times V\times V\times V \rightarrow \mb{R}$ defined by 
\bea
\beta\varowedge\gamma(x_1,x_2,x_3,x_4) \!\!\!&=&\!\!\! 
\langle\beta(x_1,x_3),\gamma(x_2,x_4) \rangle_W 
-\langle\beta(x_1,x_4),\gamma(x_2,x_3) \rangle_W \\
\!\!\!& &\!\!\! +\langle\beta(x_2,x_4),\gamma(x_1,x_3) \rangle_W-
\langle\beta(x_2,x_3),\gamma(x_1,x_4) \rangle_W.
\eea

A bilinear form $\beta\in \mathrm{Hom}(V\times V,W)$ is called {\it{flat}} with respect 
to the inner product $\langle\cdot ,\cdot \rangle_W$ of $W$ if
$$\langle\beta(x_1,x_3),\beta(x_2,x_4)\rangle_W
-\langle\beta(x_1,x_4),\beta(x_2,x_3)\rangle_W=0$$ 
for all $x_1,x_2,x_3,x_4\in V$, or equivalently if $\beta\varowedge\beta=0.$

Associated to each bilinear form $\beta$ is the {\it nullity space} $\mathcal{N}(\beta)$ 
defined by 
$$
\mathcal{N}(\beta)=\left\{x\in V\ :\ \beta(x,y)=0\ \ \text{for all}\ \ y\in V\right\}.
$$

We need the following result on flat bilinear forms, which is due to Moore 
(cf. \cite[Proposition 2, p. 93]{JDM2}).

\begin{lemma}\po\label{JDMLe}
Let $\beta\in\mathrm{Sym}(V\times V,W)$ be a flat bilinear form with respect to
 a Lorentzian inner product of $W$. If $\dim V>\dim W$ and $\beta(x,x)\neq 0$ 
 for all non-zero $x\in V$, then there is a non-zero isotropic vector $e\in W$ and 
 a bilinear form $\phi\in \mathrm{Sym}(V\times V, \R)$ such that 
 $\dim \mathcal{N}(\beta-e\phi)\geq \dim V-\dim W+2.$
\end{lemma}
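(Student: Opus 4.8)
The plan is to reduce the Lorentzian statement to the classical positive definite case by peeling off a null direction of $W$. Write $m=\dim V$ and $k=\dim W$. The engine is the classical flat bilinear form lemma in the \emph{definite} case: a flat $\gamma\in\mathrm{Sym}(V\times V,W')$ valued in a definite space $W'$ with $m>\dim W'$ has $\dim\mathcal{N}(\gamma)\geq m-\dim W'$ (proved by choosing a unit vector maximizing $\langle\gamma(x,x),\gamma(x,x)\rangle$ and reading off a large kernel from the first- and second-order optimality relations together with flatness, via induction on $\dim W'$; this is where the inequality $m>\dim W'$ is spent). I also record the elementary remark that the hypothesis $\beta(x,x)\neq 0$ forces $\mathcal{N}(\beta)=0$, since $x\in\mathcal{N}(\beta)$ would give $\beta(x,x)=0$.

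The reduction step rests on the following observation: \emph{it suffices to produce a non-zero isotropic $e\in W$ with $\mathrm{Im}\,\beta\subseteq e^{\perp}$}, where $\mathrm{Im}\,\beta=\mathrm{span}\{\beta(x,y):x,y\in V\}$. Granting such an $e$, choose a null $e'\in W$ with $\langle e,e'\rangle=1$ and set $\mathcal{S}=\{e,e'\}^{\perp}$, a positive definite subspace of dimension $k-2$. Decomposing $\beta=\phi\,e+\bar\beta$ with $\phi=\langle\beta,e'\rangle$ and $\bar\beta$ the $\mathcal{S}$-component, the condition $\mathrm{Im}\,\beta\subseteq e^{\perp}$ means the $e'$-component $\langle\beta,e\rangle$ vanishes, so that $\beta-e\phi=\bar\beta$. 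Since this $e'$-component is zero, expanding the flatness identity with $\langle e,e\rangle=\langle e',e'\rangle=0$ leaves only the $\mathcal{S}$-part, whence $\bar\beta$ is flat with respect to the definite inner product of $\mathcal{S}$. Applying the engine to $\bar\beta$ gives $\dim\mathcal{N}(\beta-e\phi)=\dim\mathcal{N}(\bar\beta)\geq m-(k-2)=m-k+2$, which is exactly the claimed bound; the passage from $k$ to $k-2$ is where the extra $+2$ is manufactured.

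It remains to find the isotropic $e$, and here the form induced on $S:=\mathrm{Im}\,\beta$ governs a trichotomy (note $S=\mathrm{span}\{\beta(x,x)\}$ by polarization). If $S$ were definite, positive or negative, then $\beta$ would be a flat form into the definite space $S$ with $\dim S\leq k<m$, and the engine would force $\dim\mathcal{N}(\beta)\geq m-\dim S>0$, contradicting $\mathcal{N}(\beta)=0$; thus the definite case cannot occur. If $S$ is degenerate, its radical $S\cap S^{\perp}$ contains a non-zero vector $e$, which is automatically isotropic and orthogonal to $\mathrm{Im}\,\beta$, so the reduction above applies verbatim.

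The remaining, and genuinely hard, case is when $S$ is non-degenerate and indefinite; since $W$ is Lorentzian this forces $S$ to be Lorentzian and $S^{\perp}$ to be positive definite, so no isotropic vector is orthogonal to $\mathrm{Im}\,\beta$ and the clean orthogonality trick fails. The isotropic direction must now be sought inside the light cone of $S$, and this is where I expect essentially all the difficulty to lie. Here I would return to the extremal argument directly: study the even function $F(x)=\langle\beta(x,x),\beta(x,x)\rangle$ on the unit sphere of $V$ (the hypothesis $\beta(x,x)\neq 0$ keeping $\beta(x,x)$ off the vertex of the cone), locate a minimizer $v$, and combine the first and second variation of $F$ at $v$ with the flatness identity to extract a null vector $e$ together with a subspace of dimension at least $m-k+2$ on which $\beta$ takes values in $\mathbb{R}e$; the coefficient form along $e$ then furnishes the required $\phi$. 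Verifying that the extremal data indeed yield an \emph{isotropic} $e$, rather than a timelike one, and the correct nullity, is the crux of the proof, the two reductions above being comparatively formal.
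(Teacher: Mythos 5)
Your two reductions are correct as far as they go: the null splitting $W=\mathbb{R}e\oplus\mathbb{R}e'\oplus\mathcal{S}$ with $\mathcal{S}$ positive definite of dimension $k-2$, the observation that $\langle\beta,e\rangle\equiv 0$ makes $\bar\beta=\beta-e\phi$ flat with respect to the definite inner product of $\mathcal{S}$, the resulting count $m-(k-2)=m-k+2$, and the elimination of the cases where $S=\mathrm{span}\,\mathrm{Im}\,\beta$ is definite or degenerate are all sound. But the argument stops exactly where the lemma begins. When $S$ is nondegenerate and Lorentzian---which your own analysis shows is the only case in which the orthogonality trick is unavailable---you offer a program (``locate a minimizer $v$ of $F(x)=\langle\beta(x,x),\beta(x,x)\rangle$ on the unit sphere, combine first and second variation with flatness, extract a null $e$ and the nullity'') with no computation behind it, and you say yourself that verifying it is ``the crux.'' That is a genuine gap, not a deferred routine check. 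Note in particular that your sufficient condition $\mathrm{Im}\,\beta\subseteq e^{\perp}$ is strictly stronger than what the conclusion asserts: the conclusion only forces $\beta(x,V)\subseteq\mathbb{R}e$ for $x$ in the nullity space of $\beta-e\phi$, while $\beta$ may well have a nonzero $e'$-component elsewhere. So in the main case the isotropic $e$ must be produced by a different mechanism, from inside the light cone of $S$, and nothing in the proposal produces it: at the level of your sketch nothing excludes, for instance, that the minimum of $F$ on the sphere is positive (all $\beta(x,x)$ spacelike), in which case the extremal data exhibit no null direction whatsoever. Showing that such configurations are incompatible with flatness together with $\mathcal{N}(\beta)=0$, and then extracting the $(m-k+2)$-dimensional nullity, is precisely the substance of the statement.

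For comparison: the paper does not reprove this lemma at all; it quotes it as Proposition 2, p.~93 of Moore \cite{JDM2}, so there is no in-paper argument for your attempt to be measured against, and the proposal has to stand on its own---which, in its present state, it does not. The known proofs of the Lorentzian statement are algebraic rather than variational in the hard case: one works with an element $x_0\in V$ for which $\dim\beta(x_0,V)$ is maximal, uses flatness to show that if the subspace $\beta(x_0,V)$ were nondegenerate then the kernel of $y\mapsto\beta(x_0,y)$ (of dimension at least $m-k>0$) would be contained in $\mathcal{N}(\beta)$, contradicting $\beta(x,x)\neq 0$; hence $\beta(x_0,V)$ is degenerate, its radical supplies the isotropic vector $e$, and a further flatness argument delivers $\phi$ and the nullity bound. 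To complete your write-up you would need either to carry out your first/second variation analysis in full---including the case distinction according to the causal character of $\beta(v,v)$ at the extremum---or to switch to this kernel-and-radical route for the remaining case; the two easy cases you settled would then be absorbed into it.
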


We define the map 
${\sf W}\colon \mathrm{Sym}(V\times V,\R)\rightarrow\mathrm{Hom}(V\times V\times V\times V,\R)$ by 
$${\sf W}(\beta) = {\sf R}(\beta)-{\sf L}(\beta)\varowedge \langle\cdot,\cdot\rangle,$$ where 
$${\sf R}(\beta) = \frac{1}{2}\beta\varowedge\beta,\ \ 
{\sf L}(\beta)=\frac{1}{n-2}\Big({\sf Ric}(\beta)-\frac{{\sf scal}(\beta)}{2(n-1)}\langle\cdot,\cdot\rangle\Big),$$
$${\sf Ric}(\beta)(x,y)=\tr\ {\sf R}(\beta)(\cdot,x,\cdot,y),\ \ x,y\in V\ \ \text{and}\ \ {\sf scal}(\beta)=\tr \ {\sf Ric} (\beta).$$

To each $\beta\in \mathrm{Sym}(V\times V,\mb{R})$ we assign a self-adjoint endomorphism 
$\beta^\sharp$ of $V$ defined by
$$\langle \beta^\sharp(x),y\rangle= \beta(x,y),\ \ x,y\in V.$$

\begin{lemma}\po\label{Cartan}
Let $\dim V=n\geq 4$ and $\beta\in \mathrm{Sym}(V\times V, \mb{R})$. 
Then ${\sf W}(\beta)=0$ if and only if $\beta^\sharp$ has an eigenvalue 
of multiplicity at least $n-1$.
\end{lemma}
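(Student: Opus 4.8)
The plan is to diagonalize $\beta^\sharp$ and reduce the vanishing of ${\sf W}(\beta)$ to a finite system of scalar equations. First I would choose an orthonormal basis $e_1,\dots,e_n$ of $V$ consisting of eigenvectors of the self-adjoint endomorphism $\beta^\sharp$, so that $\beta(e_i,e_j)=\lambda_i\delta_{ij}$. A direct computation from the definition of the Kulkarni--Nomizu product gives
\[
{\sf R}(\beta)(e_i,e_j,e_k,e_l)=\lambda_i\lambda_j\big(\delta_{ik}\delta_{jl}-\delta_{il}\delta_{jk}\big),
\]
whence ${\sf Ric}(\beta)$ is diagonal with ${\sf Ric}(\beta)(e_i,e_i)=\lambda_i(H-\lambda_i)$, where $H=\tr\,\beta^\sharp=\sum_k\lambda_k$, and ${\sf scal}(\beta)=H^2-\sum_k\lambda_k^2$. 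Hence ${\sf L}(\beta)$ is diagonal as well, and the same computation shows that ${\sf L}(\beta)\varowedge\langle\cdot,\cdot\rangle$ carries exactly the index symmetry $\delta_{ik}\delta_{jl}-\delta_{il}\delta_{jk}$. Therefore the only components of ${\sf W}(\beta)$ in this basis that can be nonzero are ${\sf W}(\beta)(e_i,e_j,e_i,e_j)$ with $i\neq j$, and writing $L_i:={\sf L}(\beta)(e_i,e_i)$ the condition ${\sf W}(\beta)=0$ becomes the scalar system
\[
\mu_{ij}:=\lambda_i\lambda_j-(L_i+L_j)=0\qquad\text{for all } i\neq j.
\]

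For the ``if'' direction I would assume, without loss of generality, that $\lambda_1=\cdots=\lambda_{n-1}=:\mu$ and $\lambda_n=:\lambda$, and evaluate the $L_i$ explicitly. A short calculation gives $L_i=\mu^2/2$ for $i\leq n-1$ and $L_n=\mu\lambda-\mu^2/2$. Checking the two types of pairs then yields $\mu_{ij}=\mu^2-\mu^2=0$ for $i,j\leq n-1$ and $\mu_{in}=\mu\lambda-\mu\lambda=0$, so ${\sf W}(\beta)=0$.

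For the converse the key device is a difference identity. From ${\sf Ric}(\beta)(e_j,e_j)-{\sf Ric}(\beta)(e_k,e_k)=(\lambda_j-\lambda_k)(H-\lambda_j-\lambda_k)$ one gets, for any three distinct indices $i,j,k$,
\[
\mu_{ij}-\mu_{ik}=(\lambda_j-\lambda_k)\Big(\lambda_i-\tfrac{1}{n-2}\big(H-\lambda_j-\lambda_k\big)\Big).
\]
Assuming the eigenvalues are not all equal, I would fix indices $p\neq q$ with $\lambda_p\neq\lambda_q$. Taking $i=r$ and $\{j,k\}=\{p,q\}$ for each $r\notin\{p,q\}$, and using $\lambda_p\neq\lambda_q$, forces every such $\lambda_r$ to equal $\nu:=\tfrac{1}{n-2}(H-\lambda_p-\lambda_q)$; since $n\geq4$ there are at least two such indices $r$. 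If both $\lambda_p\neq\nu$ and $\lambda_q\neq\nu$, then picking one such $r$ and applying the identity with $i=p,\ \{j,k\}=\{q,r\}$ and with $i=q,\ \{j,k\}=\{p,r\}$ gives $\lambda_p=\tfrac{1}{n-2}(H-\lambda_q-\nu)$ and $\lambda_q=\tfrac{1}{n-2}(H-\lambda_p-\nu)$; subtracting these and using $\lambda_p\neq\lambda_q$ forces $n-2=1$, contradicting $n\geq4$. Hence at least one of $\lambda_p,\lambda_q$ equals $\nu$, so at least $n-1$ of the eigenvalues coincide, i.e.\ $\beta^\sharp$ has an eigenvalue of multiplicity at least $n-1$.

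The bookkeeping in the first two paragraphs is routine. The genuine content, and the only place where $n\geq4$ is used, is the final combinatorial step, where the existence of two distinct ``exceptional'' eigenvalues must be excluded; I expect this to be the main obstacle. It is precisely the algebraic core of Cartan's classical characterization of conformally flat hypersurfaces.
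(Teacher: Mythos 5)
Your proof is correct, and it takes a genuinely different route from the paper's. The paper proves the forward direction by packaging the data into a flat bilinear form: it equips $\R^3$ with a Lorentzian inner product, defines $\widetilde{\beta}(x,y)=\big(\beta(x,y),\langle x,y\rangle,-{\sf L}(\beta)(x,y)\big)$, observes that ${\sf W}(\beta)=0$ says precisely that $\widetilde{\beta}$ is flat, and then invokes Moore's lemma on flat bilinear forms (Lemma 1 of the paper, which needs $\dim V=n>3=\dim\R^3$, i.e.\ $n\geq 4$) to produce a subspace of dimension at least $n-1$ on which $\beta=\lambda\langle\cdot,\cdot\rangle$; the converse is dismissed there as ``a long but straightforward computation.'' You instead diagonalize $\beta^\sharp$, reduce ${\sf W}(\beta)=0$ to the scalar system $\mu_{ij}=\lambda_i\lambda_j-(L_i+L_j)=0$ for $i\neq j$, and settle the converse by the difference identity $\mu_{ij}-\mu_{ik}=(\lambda_j-\lambda_k)\big(\lambda_i-\tfrac{1}{n-2}(H-\lambda_j-\lambda_k)\big)$. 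I checked the component computations (${\sf Ric}(\beta)(e_i,e_i)=\lambda_i(H-\lambda_i)$, the values $L_i=\mu^2/2$ and $L_n=\mu\lambda-\mu^2/2$ in the ``if'' direction, and the identity itself), and the endgame is sound: subtracting the two relations for $\lambda_p$ and $\lambda_q$ yields $\lambda_p-\lambda_q=\tfrac{1}{n-2}(\lambda_p-\lambda_q)$, impossible for $n\geq 4$ when $\lambda_p\neq\lambda_q$, so all but one eigenvalue equal $\nu$. What your route buys: it is entirely self-contained linear algebra, it makes the ``straightforward computation'' of the converse explicit, and it renders Moore's lemma unnecessary for this statement (in the paper that lemma is quoted only to prove this one, so your argument would let one drop it altogether, at the cost of the case analysis). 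What the paper's route buys: brevity given Moore's lemma, and conceptual placement within the Cartan--Moore theory of conformally flat hypersurfaces, where exactly this flat-form construction recurs. One trivial remark: in your final step you only ever use a single index $r\notin\{p,q\}$, so the observation that there are at least two such indices is redundant; $n\geq 4$ enters solely through the factor $\tfrac{1}{n-2}$, exactly as you say.
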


\begin{proof}
Let $\beta\in \mathrm{Sym}(V\times V, \R)$ be a symmetric bilinear form such that 
${\sf W}(\beta)=0$. We endow $\R^3$ with the  Lorentzian inner product $\langle\langle\cdot , \cdot \rangle\rangle$ given by
$$
\langle\langle (x_1,x_2,x_3), (y_1,y_2,y_3)  \rangle\rangle=x_1y_1+x_2y_3+x_3y_2,
$$ 
and define the symmetric bilinear form 
$\widetilde{\beta}\colon V\times V\rightarrow \R^3$ by 
$$\widetilde{\beta}(x,y)=\big(\beta(x,y),\langle x,y \rangle, -{\sf L}(\beta)(x,y)\big).$$
Since ${\sf W}(\beta)=0$ it follows that $\widetilde{\beta}$ is flat with respect to 
$\langle\langle\cdot , \cdot \rangle\rangle$. From Lemma \ref{JDMLe}, we know that 
there exists a non-zero isotropic vector $e=(t_1,t_2,t_3)\in\R^3$ and a symmetric bilinear form 
$\phi:V\times V\rightarrow \mathbb{R}$ such that 
$\dim\mathcal{N}(\widetilde\beta-\phi e)\geq n-1$. By setting 
$V_1=\mathcal{N}(\widetilde{\beta}-e\phi)$, we have that
$$
\widetilde{\beta}(x,y)=\phi(x,y) e,$$ or equivalently 
$$
\beta(x,y) = \phi(x,y)t_1, \ \ \langle x,y \rangle = \phi(x,y)t_2 \ \ 
\text{and}\ \ {\sf L}(\beta)(x,y)=-\phi(x,y) t_3
$$
 for all $x\in V_1\ \text{and}\ y\in V.$
Therefore, $$\beta(x,y) = \langle x,y \rangle \lambda$$ for all $x\in V_1\ \text{and}\ y\in V,$ 
where $\lambda=t_1/t_2$. Hence, $\lambda$ is an eigenvalue of $\beta^\sharp$ with multiplicity at least $n-1$.

Conversely, assume that 
$$
\beta^\sharp e_i=\lambda e_i,\ \ 1\leq i\leq n-1 \ \ 
\text{and}\ \ \beta^\sharp e_n=\mu e_n,
$$
 where $e_1,...,e_n$ is  an orthonormal basis of $V$. 
A long but straightforward computation then yields ${\sf W}(\beta)=0$. \qed
\end{proof}

\smallskip 

The following proposition is crucial for the proof of Theorem \ref{main}.

\begin{proposition}\po\label{mainlemma}
Given $n\geq 4$, there exists a positive constant $\varepsilon(n)$, depending only on $n$, 
such that the following inequality holds
$$
\Vert {\sf W}(\beta)\Vert^2\geq \varepsilon(n)\vert\det \beta^\sharp\vert^{4/n}
$$
 for all $\beta\in\mathrm{Sym}(V\times V,\mb{R})\smallsetminus (E_+\cup E_{-})$, 
 where $V$ is an $n$-dimensional vector space equipped with a positive definite inner 
 product, 
 $$
 E_{\pm}=\left\{\beta\in \mathrm{Sym}(V\times V,\mb{R})\ :\  \mathcal{E}_\pm(\beta)\geq n-1\right\},
 $$
and $\mathcal{E}_+(\beta)$ (respectively, $\mathcal{E}_{-}(\beta)$) is the number of positive (respectively, negative) 
eigenvalues of $\beta^\sharp$, each one counted with its multiplicity.
\end{proposition}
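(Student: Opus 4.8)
The plan is to reduce the inequality to a statement about the spectrum of $\beta^\sharp$ and then run a compactness argument, isolating the one genuinely delicate point. Both sides are invariant under the $O(V)$-action $\beta\mapsto\beta(O\,\cdot\,,O\,\cdot\,)$, which conjugates $\beta^\sharp$, so they are functions of the eigenvalues $\lambda=(\lambda_1,\dots,\lambda_n)$ of $\beta^\sharp$ alone. Writing $f(\lambda)=\Vert {\sf W}(\beta)\Vert^2$ and $g(\lambda)=|\det\beta^\sharp|^{4/n}=(\prod_i|\lambda_i|)^{4/n}$, both are continuous, symmetric, and homogeneous of degree four (recall ${\sf W}(\beta)$ is quadratic in $\beta$). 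Since the asserted inequality is thus scale invariant, it suffices to prove it on the unit sphere $S=\{\lambda:\sum_i\lambda_i^2=1\}$. Because having at least $n-1$ strictly positive (resp. negative) eigenvalues is an open condition, the set $E_+\cup E_-$ is open, so $K:=S\setminus(E_+\cup E_-)$ is compact. Since $f\geq 0$ and $g\geq 0$, it then suffices to bound the ratio $\Phi:=f/g$ below by a positive constant on $D:=K\cap\{g>0\}$: this gives $f\geq\varepsilon(n)\,g$ on $D$, trivially on $K\setminus D$ where $g=0$, and everywhere by homogeneity.

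Next I would argue by contradiction on the compact set $K$. If $\inf_D\Phi=0$, pick $\lambda^{(k)}\in D$ with $\Phi(\lambda^{(k)})\to 0$ and, by compactness, a limit $\lambda^*\in K$. The key consequence of Lemma \ref{Cartan} is that $f(\mu)>0$ whenever $\mu\in K$ and $g(\mu)>0$: indeed $f(\mu)=0$ forces $\mu$ to have an eigenvalue of multiplicity at least $n-1$, and if that eigenvalue were nonzero it would place $\mu$ in $E_+$ or $E_-$. This disposes of two cases at once. If $g(\lambda^*)>0$ then $\lambda^*\in D$ and $\Phi(\lambda^*)=f(\lambda^*)/g(\lambda^*)>0$, contradicting continuity; and if $g(\lambda^*)=0$ but $f(\lambda^*)>0$ then $\Phi(\lambda^{(k)})\to+\infty$, again a contradiction. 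Hence the only surviving possibility is $f(\lambda^*)=g(\lambda^*)=0$, in which case the same analysis forces the repeated eigenvalue to be zero, so on the sphere $\lambda^*$ is, up to permutation, $(\pm1,0,\dots,0)$.

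The main obstacle is therefore the local behaviour near such a point, where $f$ and $g$ vanish together and $\Phi$ is an indeterminate $0/0$. I would resolve this by a scaling analysis. Writing the spectrum near $(1,0,\dots,0)$ as $\lambda_1=a\to 1$ and $\mu=(\lambda_2,\dots,\lambda_n)\to 0$, a direct computation of the sectional components $W_{ij}={\sf W}(\beta)(e_i,e_j,e_i,e_j)$ in an orthonormal eigenbasis reveals that, to leading order, $f\sim a^2 Q(\mu)$ for an explicit positive semidefinite quadratic form $Q$ whose kernel is exactly the line $\mu_2=\cdots=\mu_n$ — precisely the configuration that yields an eigenvalue of multiplicity $n-1$ in Lemma \ref{Cartan}. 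Decomposing $\mu=r\omega$ with $r=|\mu|$ and $|\omega|=1$, and using $g\sim |a|^{4/n}\,r^{4(n-1)/n}(\prod_{k\geq 2}|\omega_k|)^{4/n}$, one obtains $\Phi\sim C\,r^{(4-2n)/n}\,Q(\omega)/(\prod_{k\geq 2}|\omega_k|)^{4/n}$, where $(4-2n)/n<0$.

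The crux is then to control the angular factor uniformly. Here the membership $\lambda\in K$ is decisive: with $\lambda_1>0$ and all $\mu_k\neq 0$, the constraints $\mathcal{E}_\pm\leq n-2$ force the small eigenvalues $\mu$ to contain at least two negative and at least one positive entry (by symmetry the reversed counts at $(-1,0,\dots,0)$). Consequently the admissible directions $\omega$ lie in a compact subset of the sphere whose closure avoids the two degenerate directions $\pm(1,\dots,1)/\sqrt{n-1}$ at which $Q$ vanishes. On that compact set $Q(\omega)$ is bounded below by a positive constant while $(\prod_{k\geq 2}|\omega_k|)^{4/n}$ is bounded above, so $\Phi\sim C\,r^{(4-2n)/n}\to+\infty$ as $r\to 0$. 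This contradicts $\Phi(\lambda^{(k)})\to 0$, so $\inf_D\Phi>0$, and taking $\varepsilon(n):=\inf_D\Phi$ yields the claim. I expect the quadratic-form computation together with the verification that the $K$-constraint keeps $\omega$ away from the all-equal directions to be the delicate part; everything else is soft compactness and homogeneity.
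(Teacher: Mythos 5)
Your proposal is correct and takes essentially the same approach as the paper: both reduce to the eigenvalue functions by $\mathsf{O}(n)$-invariance and degree-$4$ homogeneity, locate the only possible failure at the degenerate spectra $(\pm 1,0,\dots,0)$ via Lemma \ref{Cartan}, and resolve it with the same leading-order computation — your quadratic form $Q(\omega)$ with kernel the all-equal line is exactly the paper's $\lim_{m\to\infty}\delta_m$, your sign-count observation (at least one positive and two negative small eigenvalues) is the paper's use of $a_m\in U_n$ to force $\lim\delta_m\neq 0$, and your blow-up rate $r^{(4-2n)/n}\to\infty$ is the paper's $\rho_m^4\eta_m^2\to\infty$. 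The only difference is the cosmetic choice of normalization: you bound the ratio on the unit sphere, whereas the paper minimizes $\phi$ over the noncompact level set $\{\psi=\pm 1\}$ by a minimizing-sequence argument.
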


\begin{proof}
Let $\beta\in\mathrm{Sym}(V\times V,\mb{R})$ and let $e_1,\dots,e_n$ be an orthonormal 
basis of $V$ that diagonalizes $\beta^\sharp$ with corresponding eigenvalues 
$\lambda_1,\dots,\lambda_n$. By an easy computation we obtain
$$
\Vert {\sf W}(\beta)\Vert^2=4\sum_{i<j}\left(\lambda_i\lambda_j-
\frac{1}{n-2}\Big((\lambda_i+\lambda_j)\sum_{k=1}^n\lambda_k-
(\lambda_i^2+\lambda_j^2)-\frac{\sum_{k\neq l}\lambda_k\lambda_l}{n-1}\Big)\right)^2.
$$
We consider the functions $\phi,\psi\colon \R^n \rightarrow \mathbb{R}$ defined by
$$
\phi(x) = 4\sum_{i<j}\left(x_ix_j-\frac{1}{n-2}\Big(\sigma_1(x)(x_i+x_j)-(x_i^2+x_j^2)-
\frac{2\sigma_2(x)}{n-1}\Big)\right)^2\;\; \text{and} \;\; \psi(x)=\prod_{i=1}^n x_i,
$$  
where 
$$
\sigma_1(x)=\sum_{i=1}^nx_i,\ \sigma_2(x)=\sum_{i<j}x_ix_j\;\; \text{and}\;\; x=(x_1,\dots,x_n).
$$
In order to prove the desired inequality, it is sufficient to show that there exists a positive constant 
$\varepsilon(n)$, depending only on $n$, such that 
$$
\phi(x)\geq \varepsilon(n)\psi(x)\ \ \text{for all}\ \ x\in U_n,
$$
where $U_n=\R^n\smallsetminus (K^n_+\cup K^n_-)$ and $K^n_+$ (respectively, $K^n_{-}$) is the subset 
of points in $\R^n$ with at least $n-1$ positive (respectively, negative) coordinates.

At first we are going to prove that $\phi$ attains a positive minimum on the level set 
$$
\Sigma_n=\left\{x\in U_n\ :\ \psi(x)=\varepsilon\right\},
$$ 
where $\varepsilon =\pm1$. Since 
$\phi(\Sigma_n)$ is bounded from below, there exists a sequence $\{z_m\}$ in $\Sigma_n$ 
such that $\lim_{m\rightarrow\infty}\phi(z_m)=\inf\phi(\Sigma_n)\geq0.$ We write 
$z_m=\rho_m a_m$, where $\rho_m=\Vert z_m\Vert$ and $a_m$ lies in the unit $(n-1)$-dimensional sphere 
$\mb{S}^{n-1}\subset\mb{R}^n$. 

We claim that $\{z_m\}$ is bounded. Suppose, to the contrary, that there exists a subsequence 
of $\{z_m\}$, which by abuse of notation is again denoted  by $\{z_m\}$, such that 
$\lim_{m\rightarrow\infty}\rho_m=\infty.$ Since $a_m\in\Sp^{n-1}$ we may assume, by taking a 
subsequence if necessary, that $\lim_{m\rightarrow\infty}a_m=a\in\Sp^{n-1}$.  

From the homogeneity of $\phi$ and $\psi$ we obtain 
$$\phi(a_m)=\frac{\phi(z_m)}{\rho_m^4}\ \ \text{and}\ \ \rho_m=\frac{1}{\vert \psi(a_m) \vert^{1/n}}.$$
Hence, $\lim_{m\rightarrow\infty} \phi(a_m)=0$ and thus $\phi(a)=0$. Lemma \ref{Cartan} implies that
 at least $n-1$ coordinates of $a$ are equal. On the other hand, $a_m\in U_n$ and since $U_n$ is 
closed we have $a\in U_n$. Therefore, $n-1$ coordinates of $a$ vanish. After an eventual reenumeration, 
we may suppose that $a=(\varepsilon,0,\dots,0)$. We set 
$$
a_m=(a_{m,1},\dots,a_{m,n})\ \ 
\text{and}\ \  \eta_m=\Big(\sum_{i=2}^n a_{m,i}^2\Big)^{1/2}.
$$ 
Since $\psi(a_m)\neq 0$, we may write 
$$
(a_{m,2},\dots,a_{m,n})=\eta_m\theta_m,
$$ 
where 
$$
\theta_m=(\theta_{m,2},\dots,\theta_{m,n})
$$ 
lies in the unit 
$(n-2)$-dimensional sphere $\Sp^{n-2}\subset \R^{n-1}$.
Then from $\phi(z_m)= \rho_m^4\phi(a_m)$ we have that
$$
\phi(z_m) \geq 4\rho_m^4\sum_{2\leq i<j}\left(a_{m,i}a_{m,j}-
\frac{1}{n-2}\Big(\sigma_1(a_m)(a_{m,i}+a_{m,j})-(a_{m,i}^2+a_{m,j}^2)-
\frac{2\sigma_2(a_m)}{n-1}\Big)\right)^2.
$$
We observe that
 $$\sigma_1(a_m)=a_{m,1}+{\eta_m}\sum_{i=2}^n\theta_{m,i} \ \ \text{and}\ \
\sigma_2(a_m) =\eta_m\widetilde{\sigma}_2(\theta_m),
$$
where 
$$\widetilde{\sigma}_2(\theta_m)=a_{m,1}\sum_{j=2}^n \theta_{m,j}+\eta_m\sum_{2\leq i<j}\theta_{m,i}\theta_{m,j}.$$
Therefore, we have
\begin{equation}\label{P1}
\phi(z_m)\geq 4\rho_m^4 \eta_m^2\delta_m,
\end{equation} where 
$$\delta_m=\sum_{2\leq i<j}\left({\eta_m}\theta_{m,i}\theta_{m,j}-
\frac{1}{n-2}\Big(\sigma_1(a_m)(\theta_{m,i}+\theta_{m,j})-{\eta_m}(\theta_{m,i}^2+\theta_{m,j}^2)
-\frac{2\widetilde{\sigma}_2(\theta_m)}{n-1}\Big)\right)^2.$$
Moreover, we obtain
\bea
\rho_m^4\eta_m^2\!\!\!&=&\!\!\! \frac{\eta_m^2}{|\psi(a_m)|^{4/n}} \\[1mm] 
\!\!\!&=&\!\!\!  \frac{\eta_m^2}{|a_{m,1}|^{4/n} \big\vert\prod_{i=2}^n a_{m,i}\big\vert^{4/n}} \\[1mm] 
\!\!\!&=&\!\!\!  \frac{1}{|a_{m,1}|^{4/n} \ \eta_m^{\frac{2(n-2)}{n}} \big\vert\prod_{i=2}^n \theta_{m,i}\big\vert^{4/n}}.
\eea
By passing if necessary to a subsequence, we may assume that 
$$
\lim_{m\rightarrow \infty}\theta_m=(\bar{\theta}_2,\dots,\bar{\theta}_n)\in\mb{S}^{n-2}.
$$
Clearly the above yields $$\lim_{m\rightarrow\infty}\rho_m^4\eta_m^2=\infty.$$
Using 
$$
\lim_{m\rightarrow \infty}\sigma_1(a_m)=\varepsilon\ \ \text{and}\ \ 
\lim_{m\rightarrow \infty}\widetilde{\sigma}_2(\theta_m)= \varepsilon\sum_{k=2}^n\bar{\theta}_k,
$$ 
we find that
$$
\lim_{m\rightarrow \infty}\delta_m = 
\frac{1}{(n-2)^2}\sum_{2\leq i<j}\left(\bar{\theta}_i+\bar{\theta}_j-\frac{2\sum_{k=2}^n\bar{\theta}_k}{n-1}\right)^2.
$$

We claim that $\lim_{m\rightarrow \infty}\delta_m\neq0$. Arguing indirectly, assume that 
$\lim_{m\rightarrow \infty}\delta_m=0$. This implies that 
$\bar{\theta}_2=\cdots=\bar{\theta}_n\neq0,$ 
which contradicts for large $m$ the fact that $a_m\in U_n$.
Thus, by taking limits in (\ref{P1}) we reach a contradiction 
and this proves the claim that the sequence $\{z_m\}$ is bounded. 

By passing if necessary to a subsequence, we have 
$$
\lim_{m\rightarrow\infty}z_m=z\in \Sigma_n.
$$  
Since $\Sigma_n$ 
doesn't contain any zeros of $\phi$ it follows that
$$
\min\phi(\Sigma_n)=\lim_{m\rightarrow \infty}\phi(z_m)=\phi(z)>0.
$$ 
Hence the function $\phi$ attains a positive minimum $\varepsilon (n)=\phi(z)$ 
on $\Sigma_n$, which obviously depends only on $n$.

Now, let $x\in U_n$. Assume that $\psi(x)\neq 0$ and set 
$\widetilde{x}=x/|\psi(x)|^{1/n}$. Clearly $\widetilde{x}\in \Sigma_n$ 
and consequently $\phi(\widetilde{x})\geq \varepsilon(n)$. Since $\phi$ 
is homogeneous of degree $4$, the desired inequality is obviously fulfilled. 
In the case where $\psi(x)= 0$, the inequality is trivial. \qed
\end{proof}

\begin{remark}\po\label{constant}
\rm{
The constant $\varepsilon(n)$ that appears in Proposition \ref{mainlemma} is not computed 
explicitly here, although one can apply the Lagrange multiplier method to 
compute it. 
}
\end{remark}

\section{The proofs}

At first we recall some well known facts on the total curvature and how Morse 
theory provides restrictions on the Betti numbers.
Let $f\colon (M^n,g)\rightarrow \mb{R}^{n+k}$ be an isometric immersion of a 
compact, connected and oriented $n$-dimensional Riemannian manifold into 
the $(n+k)$-dimensional Euclidean space $\R^{n+k}$ equipped with the usual 
inner product $\langle\cdot,\cdot\rangle$. The normal bundle of $f$ is given by 
$$N_fM=\left\{(p,\xi)\in f^*(T\mb{R}^{n+k})\ :\ \xi\perp df_p(T_pM) \right\}$$ and 
the corresponding unit normal bundle is defined by 
$$UN_f=\left\{(p,\xi)\in N_fM\ :\ \Vert\xi\Vert=1\right\},$$
where $f^*(T\mb{R}^{n+k})$ is the induced bundle of $f$ over $M$.

The {\it generalized Gauss map} $\nu\colon UN_f\rightarrow \mb{S}^{n+k-1}$ is 
defined by $\nu(p,\xi)=\xi$, where $\Sp^{n+k-1}$ is the unit $(n+k-1)$-dimensional 
sphere of $\R^{n+k}$. For each $u\in\mb{S}^{n+k-1}$, we consider the height function 
$h_u\colon M^n\rightarrow \mb{R}$ defined by $h_u(p)=\langle f(p),u\rangle,\ p\in M^n$. 
Since $h_u$ has a degenerate critical point if and only if $u$ is a critical point of the 
generalized Gauss map, by Sard's theorem there exists a subset $E\subset\Sp^{n+k-1}$ 
of measure zero such that $h_u$ is a Morse function for all $u\in\Sp^{n+k-1}\smallsetminus E$. 
For each $u\in \Sp^{n+k-1}\smallsetminus E$, we denote by $\mu_i(u)$ the number of critical 
points of $h_u$ of index $i$. We also set $\mu_{i}(u)=0$ for any $u\in E$. Following Kuiper 
\cite{Kuiper}, we define the {\it total curvature of index $i$ of $f$} by 
$$
\tau_i(f)=\frac{1}{\mathrm{Vol}(\mb{S}^{n+k-1})}\int_{\mb{S}^{n+k-1}}\mu_i(u)\ d\mb{S},
$$
where $d\Sp$ denotes the volume element of the sphere $\Sp^{n+k-1}$.

Let $\beta_i(M^n; \F)=\dim_\F H_i(M^n;\F)$ be the $i$-th Betti number of $M^n$ over an arbitrary coefficient field 
$\F$. Then, due to weak Morse inequalities \cite[Theorem 5.2, p. 29]{Milnor} we have that $\mu_i(u)\geq \beta_i(M^n;\F)$, 
for all $u\in \Sp^{n+k-1}\smallsetminus E$. By integrating over $\Sp^{n+k-1}$, we obtain 
\begin{equation}\label{TC}
\tau_i(f)\geq \beta_i(M^n;\F).  
\end{equation}

For each $(p,\xi)\in UN_f$, we denote by $A_{\xi}$ the shape operator  of $f$ in the direction 
$\xi$ which is given by $$g (A_{\xi} X,Y)=\langle \alpha_f(X,Y),\xi\rangle,$$ where $X,Y$ are 
tangent vector fields of $M^n$ and $\alpha_f$ is the second fundamental form of the immersion 
$f$ viewed as a section of the vector bundle $\mathrm{Hom}(TM\times TM,N_f M)$. There is a natural 
volume element $d\Sigma$ on the unit normal bundle $UN_f$. In fact, if $dV$ is a $(k-1)$-form on 
$UN_f$ such that its restriction to a fiber of the unit normal bundle at $(p,\xi)$ is the volume
element of the unit $(k-1)$-sphere of the normal space of $f$ at $p$,
then $d\Sigma=dM_g\wedge dV$, where $dM_g$ is the volume element of $M^n$ with 
respect to the metric $g$.

Shiohama and Xu considered for each $0\leq i\leq n$ the subset $U^iN_f$ of the unit normal 
bundle of $f$ defined by 
$$
U^iN_f=\left\{(p,\xi)\in UN_f\ :\ \mathrm{Index}(A_\xi)=i\right\}
$$ 
and proved 
(cf.  \cite[Lemma, p. 381]{SX}) that 
\begin{equation}\label{ShXu}
\int_{U^iN_f}|\mathrm{det}A_\xi |\ d\Sigma=\int_{\mb{S}^{n+k-1}}\mu_i(u)\ d\Sp.
\end{equation}

We recall that the $(0,4)$-Riemann curvature tensor $R_g$ of $(M^n,g)$ is related to the 
second fundamental form of $f$ via the Gauss equation 
$$R_g(X,Y,Z,W)=\langle \alpha_f(X,Z),\alpha_f(Y,W)\rangle-
\langle\alpha_f(X,W),\alpha_f(Y,Z)\rangle,$$ where $X,Y,Z,W$ are tangent vector fields of $M^n$. 
In terms of the Kulkarni-Nomizu product, the Gauss equation is written equivalently as 
$$R_g=\frac{1}{2}\alpha_f\varowedge\alpha_f.$$
On the other hand, $R_g$ admits the following orthogonal decomposition (cf. \cite[Chapter 8D]{Kun})
\begin{equation}\label{Rdec}
R_g=\mathcal{W}_g+\frac{\scal_g}{2n(n-1)}g\varowedge g+
\frac{1}{n-2}\Big(\Ric_g-\frac{\scal_g}{n}g\Big)\varowedge g,
\end{equation}
which is in addition irreducible with respect to the (simultaneous) action 
of the orthogonal group $\mathsf{O}(n)$ on the four arguments of $R_g$. Equation 
(\ref{Rdec}) is equivalent to 
\be \label{WSh}
R_g=\mathcal{W}_g+\Sh_g\varowedge g,
\ee
where $$\Sh_g=\frac{1}{n-2}\Big(\Ric_g-\frac{\scal_g}{2(n-1)}g\Big).$$
The $(0,4)$-tensor $\mathcal{W}_g$ is the {\it Weyl tensor} and the $(0,2)$-tensor 
$\Sh_g$ is the {\it Schouten tensor} of the Riemannian manifold $(M^n,g)$.

\medskip

\noindent\emph{Proof of Theorem \ref{main}:} 
Let $f\colon (M^n,g)\rightarrow \mathbb{R}^{n+1}$ be a conformal immersion with 
unit normal bundle $UN_f$ and shape operator $A_\xi$ with respect to $\xi$, where 
$(p,\xi)\in UN_f$. Using the Gauss equation, it follows that the Weyl tensor $\mathcal{W}_{\tilde{g}}$ 
with respect to the metric $\tilde{g}$ induced by $f$  is given by 
$$
\mathcal{W}_{\tilde{g}}(p)=\mathsf{W}(\alpha_f(p)).
$$ 
From Proposition \ref{mainlemma} 
we obtain 
$$
\Vert \mathcal{W}_{\tilde{g}}(p)\Vert^2\geq \varepsilon(n)\vert \det A_\xi(p)\vert^{4/n}
$$ 
for all $(p,\xi)\in U^iN_f$ and $2\leq i\leq n-2$. By integrating over $UN_f$, we have that
\begin{equation}\label{W2}
\int_{UN_f}\Vert\mathcal{W}_{\tilde{g}}\Vert^{n/2} \ d\Sigma\geq 
(\varepsilon(n))^{n/4}\sum_{i=2}^{n-2}\int_{U^iN_f} |\det A_\xi| \ d\Sigma.
\end{equation}
Using (\ref{ShXu}), it follows that 
$$
\sum_{i=2}^{n-2}\int_{U^iN_f} |\det A_\xi| \ d\Sigma=
\sum_{i=2}^{n-2}\int_{\Sp^n}\mu_i(u) \ d\Sp=\Vol(\Sp^{n})\sum_{i=2}^{n-2}\tau_i(f),
$$
where $d\Sp$ is the volume element of the unit $n$-dimensional sphere $\Sp^n$. 
Therefore, from (\ref{W2}) and bearing in mind (\ref{TC}), we obtain
\begin{equation}\label{W3}
\int_{UN_f}\Vert\mathcal{W}_{\tilde{g}}\Vert^{n/2} \ d\Sigma \geq 
(\varepsilon(n))^{n/4} \Vol(\Sp^{n})\sum_{i=2}^{n-2}\tau_i(f) \geq 
(\varepsilon(n))^{n/4} \Vol(\Sp^{n})\sum_{i=2}^{n-2}\beta_i(M^n;\F).
\end{equation}
Observe that 
$$\int_{M^n}\Vert \mathcal{W}_{\tilde{g}} \Vert^{n/2}\ dM_{\tilde{g}}=
\frac{1}{2}\int_{UN_f}\Vert\mathcal{W}_{\tilde{g}}\Vert^{n/2} \ d\Sigma.$$
Thus, from (\ref{W3}) and the fact that the $L^{n/2}$-norm of the Weyl tensor 
is conformally invariant,  we have that 
$$\int_{M^n}\Vert \mathcal{W}_g \Vert^{n/2}\ dM_g\geq c(n) \sum_{i=2}^{n-2}\beta_i(M^n;\F),$$
where the constant $c(n)$ is given by 
$c(n)=(\varepsilon(n))^{n/4}\Vol(\Sp^n)/2$. 

Now, assume that $$\int_{M^n}\Vert \mathcal{W}_g \Vert^{n/2}\ dM_g<c(n).$$ 
Then, it follows from (\ref{W3}) that $\sum_{i=2}^{n-2} \tau_i(f)<1.$ Thus, there 
exists $u\in \Sp^{n}$ such that the height function $h_u\colon M^n\rightarrow \R$ is a 
Morse function whose number of critical points of index $i$ satisfies $\mu_i(u)=0$ for 
any $2\leq i\leq n-2$. From the fundamental theorem of Morse theory 
(cf. \cite[Theorem 3.5, p. 20]{Milnor} or \cite[Theorem 4.10, p. 84]{CE}), it follows that $M^n$ has 
the homotopy type of a CW-complex with no cells of dimension $i$ for $2\leq i\leq n-2$. 
Hence, the homology groups satisfy $H_i(M^n;\Z)=0$ for all $2\leq i\leq n-2$. Moreover, since there 
are no $2$-cells, we conclude by the cellular approximation theorem that the inclusion 
of the $1$-skeleton $\mathrm{X}^{(1)}\hookrightarrow M^n$ induces isomorphism between 
the fundamental groups. Therefore, the fundamental group $\pi_1(M^n)$ is a free group on 
$\beta_1(M^n;\Z)$ elements and $H_1(M^n;\Z)$ is a free abelian group on $\beta_1(M^n;\Z)$ generators. 
In particular, if $\pi_1(M^n)$ is finite, then $\pi_1(M^n)=0$ and hence $H_1(M^n;\Z)=0$. 
From Poincar\'e duality and the universal coefficient theorem it follows that $H_{n-1}(M^n;\Z)=0$. 
Thus, $M^n$ is a simply connected homology sphere and hence a homotopy sphere. By the generalized 
Poincar\'e conjecture (Smale $n\geq 5$, Freedman $n=4$) we deduce that $M^n$ is homeomorphic to $\Sp^n$. \qed

\medskip

\noindent\emph{Proof of Theorem \ref{MHS}:} 
Let $f\colon (M^n,g)\rightarrow \Sp^{n+1}$ be an isometric minimal immersion of a compact $n$-dimensional
Riemannian manifold, with shape operator $A$. Since $f$ is minimal, the Ricci tensor of $M^n$ is given by 
$$
\Ric_g(X,Y)=(n-1)g(X,Y)-g(A^2X,Y),\ \ X,Y\in TM.
$$
Using the Gauss equation and (\ref{Rdec}), we obtain that
$$
\Vert\mathcal{W}_g\Vert^2=4\sum_{i<j} \left(\lambda_i\lambda_j+\frac{\lambda_i^2+\lambda_j^2}{n-2}-\frac{\Vert A\Vert^2}{(n-1)(n-2)}\right)^2,
$$
where $\lambda_1,\dots,\lambda_n$ are the principal curvatures of $f$.
After a straightforward computation, we find that
\begin{equation}\label{TW1}
\Vert\mathcal{W}_g\Vert^2=\gamma(n)\Vert A\Vert^4-\delta(n)\Vert \Ric_g-(n-1) g \Vert^2,
\end{equation} where 
$$
\gamma(n)=\frac{2(n^2-3n+5)}{(n-1)(n-2)}\ \ \text{and}\ \ \delta(n)=\frac{2(n+1)}{n-2}.
$$
From the Cauchy-Schwartz inequality we have that 
$$\Vert A\Vert^4\leq n\Vert A^2\Vert^2=n\Vert \Ric_g-(n-1) g \Vert^2.$$ 
Therefore, from (\ref{TW1}) we obtain 
$$\Vert\mathcal{W}_g\Vert^2\leq a(n)\Vert \Ric_g-(n-1) g \Vert^2,$$ where $a(n)=n\gamma(n)-\delta(n)$.
By integrating over $M^n$, it follows that
\be\label{InRic}
\int_{M^n}\Vert \mathcal{W}_g \Vert^{n/2}\ dM_g \leq  \big(a(n)\big)^{n/4}\int_{M^n} \Vert \Ric_g-(n-1) g \Vert^{n/2}\ dM_g.
\ee
The proof follows from (\ref{InRic}), Theorem \ref{main} applied to the composition of $f$ with the stereographic projection 
and the fact that the $L^{n/2}$-norm of the Weyl tensor is conformally invariant. Clearly, $c_1(n)=c(n)/\big(a(n)\big)^{n/4}$. \qed

\medskip

\noindent\emph{Proof of Theorem \ref{Th3}:} 
Assume that $(M^n,g)$ admits an isometric immersion into the sphere $\Sp^{n+1}$ or the hyperbolic space $\mathbb{H}^{n+1}$.
 Using the orthogonal decomposition of the curvature tensor $R_g$ in (\ref{Rdec}) 
and recalling the fact that $R_1=(1/2) g\varowedge g$, we obtain
$$
\Big\Vert R_g-\frac{\scal_g}{n(n-1)}R_1\Big\Vert^2=\Vert\mathcal{W}_g\Vert^2+
\Big\Vert\frac{1}{n-2}\Big(\Ric_g-\frac{\scal_g}{n}g\Big)\varowedge g\Big\Vert^2\geq \Vert\mathcal{W}_g\Vert^2.
$$
Thus, it follows that
\be\label{Rten}
\int_{M^n}\Big\Vert R_g-\frac{\scal_g}{n(n-1)}R_1\Big\Vert^{n/2}\ dM_g \geq \int_{M^n} \Vert\mathcal{W}_g\Vert^{n/2}\ dM_g.
\ee
Now the proof follows from (\ref{Rten}), Theorem \ref{main} and Remark \ref{rem}.  \qed

\begin{bibdiv}

\begin{biblist}

\bib{ABKS}{article}{
   author={Akutagawa, K.},
   author={Botvinnik, B.},
   author={Kobayashi, O.},
   author={Seshadri, H.},
   title={The Weyl functional near the Yamabe invariant},
   journal={J. Geom. Anal.},
   volume={13},
   date={2003},
   number={1},
   pages={1--20},
   issn={1050-6926},
   review={\MR{1967032}},
   doi={10.1007/BF02930992},
}

\bib{ECartan}{article}{
   author={Cartan, E.},
   title={La d\'eformation des hypersurfaces dans l'espace conforme r\'eel
   \`a $n \ge 5$ dimensions},
   language={French},
   journal={Bull. Soc. Math. France},
   volume={45},
   date={1917},
   pages={57--121},
   issn={0037-9484},
   review={\MR{1504762}},
}

\bib{Catino}{article}{
   author={Catino, G.},
   title={On conformally flat manifolds with constant positive scalar
   curvature},
   journal={Proc. Amer. Math. Soc.},
   volume={144},
   date={2016},
   number={6},
   pages={2627--2634},
   issn={0002-9939},
   review={\MR{3477081}},
   doi={10.1090/proc/12925},
}

\bib{CE}{book}{
   author={Cheeger, J.},
   author={Ebin, D.G.},
   title={Comparison theorems in Riemannian geometry},
   note={North-Holland Mathematical Library, Vol. 9},
   publisher={North-Holland Publishing Co., Amsterdam-Oxford; American
   Elsevier Publishing Co., Inc., New York},
   date={1975},
   pages={viii+174},
   review={\MR{0458335}},
}

\bib{ChSim}{article}{
   author={Chern, S.S.},
   author={Simons, J.},
   title={Characteristic forms and geometric invariants},
   journal={Ann. of Math. (2)},
   volume={99},
   date={1974},
   pages={48--69},
   issn={0003-486X},
   review={\MR{0353327}},
}

\bib{DaFlo}{article}{
   author={Dajczer, M.},
   author={Florit, L.A.},
   title={On conformally flat submanifolds},
   journal={Comm. Anal. Geom.},
   volume={4},
   date={1996},
   number={1-2},
   pages={261--284},
   issn={1019-8385},
   review={\MR{1393564}},
   doi={10.4310/CAG.1996.v4.n2.a3},
}

\bib{MMF}{article}{
   author={do Carmo, M.},
   author={Dajczer, M.},
   author={Mercuri, F.},
   title={Compact conformally flat hypersurfaces},
   journal={Trans. Amer. Math. Soc.},
   volume={288},
   date={1985},
   number={1},
   pages={189--203},
   issn={0002-9947},
   review={\MR{773056}},
   doi={10.2307/2000435},
}

\bib{DuNo}{article}{
   author={Dussan, M.P.},
   author={Noronha, M.H.},
   title={Manifolds with 2-nonnegative Ricci operator},
   journal={Pacific J. Math.},
   volume={204},
   date={2002},
   number={2},
   pages={319--334},
   issn={0030-8730},
   review={\MR{1907895}},
   doi={10.2140/pjm.2002.204.319},
}

\bib{Gursky}{article}{
   author={Gursky, M.J.},
   title={The Weyl functional, de Rham cohomology, and K\"ahler-Einstein
   metrics},
   journal={Ann. of Math. (2)},
   volume={148},
   date={1998},
   number={1},
   pages={315--337},
   issn={0003-486X},
   review={\MR{1652920}},
   doi={10.2307/120996},
}

\bib{ItSa}{article}{
   author={Itoh, M.},
   author={Satoh, H.},
   title={Isolation of the Weyl conformal tensor for Einstein manifolds},
   journal={Proc. Japan Acad. Ser. A Math. Sci.},
   volume={78},
   date={2002},
   number={7},
   pages={140--142},
   issn={0386-2194},
   review={\MR{1930219}},
}

\bib{Kun}{book}{
   author={K{\"u}hnel, W.},
   title={Differential geometry},
   series={Student Mathematical Library},
   volume={77},
   note={Curves---surfaces---manifolds;
   Third edition [of  MR1882174];
   Translated from the 2013 German edition by Bruce Hunt, with corrections
   and additions by the author},
   publisher={American Mathematical Society, Providence, RI},
   date={2015},
   pages={xii+402},
   isbn={978-1-4704-2320-9},
   review={\MR{3443721}},
   doi={10.1090/stml/077},
}

\bib{Kuiper}{article}{
   author={Kuiper, N.H.},
   title={Minimal total absolute curvature for immersions},
   journal={Invent. Math.},
   volume={10},
   date={1970},
   pages={209--238},
   issn={0020-9910},
   review={\MR{0267597}},
}

\bib{Listing}{article}{
   author={Listing, M.},
   title={$L^{n/2}$-curvature gaps of the Weyl tensor},
   journal={J. Geom. Anal.},
   volume={24},
   date={2014},
   number={2},
   pages={786--797},
   issn={1050-6926},
   review={\MR{3192298}},
   doi={10.1007/s12220-012-9356-7},
}

\bib{Milnor}{book}{
   author={Milnor, J.},
   title={Morse theory},
   series={Based on lecture notes by M. Spivak and R. Wells. Annals of
   Mathematics Studies, No. 51},
   publisher={Princeton University Press, Princeton, N.J.},
   date={1963},
   pages={vi+153},
   review={\MR{0163331}},
}

\bib{JDM2}{article}{
   author={Moore, J.D.},
   title={Conformally flat submanifolds of Euclidean space},
   journal={Math. Ann.},
   volume={225},
   date={1977},
   number={1},
   pages={89--97},
   issn={0025-5831},
   review={\MR{0431046}},
}

\bib{Pinkall}{article}{
   author={Pinkall, U.},
   title={Compact conformally flat hypersurfaces},
   conference={
      title={Conformal geometry},
      address={Bonn},
      date={1985/1986},
   },
   book={
      series={Aspects Math., E12},
      publisher={Vieweg, Braunschweig},
   },
   date={1988},
   pages={217--236},
   review={\MR{979794}},
}

\bib{SeH}{article}{
   author={Seshadri, H.},
   title={Weyl curvature and the Euler characteristic in dimension four},
   journal={Differential Geom. Appl.},
   volume={24},
   date={2006},
   number={2},
   pages={172--177},
   issn={0926-2245},
   review={\MR{2198792}},
   doi={10.1016/j.difgeo.2005.08.008},
}

\bib{SX}{article}{
   author={Shiohama, K.},
   author={Xu, H.},
   title={Lower bound for $L^{n/2}$ curvature norm and its application},
   journal={J. Geom. Anal.},
   volume={7},
   date={1997},
   number={3},
   pages={377--386},
   issn={1050-6926},
   review={\MR{1674797}},
   doi={10.1007/BF02921626},
}

\bib{SX1}{article}{
   author={Shiohama, K.},
   author={Xu, H.},
   title={Rigidity and sphere theorems for submanifolds},
   journal={Kyushu J. Math.},
   volume={48},
   date={1994},
   pages={291--306},
}

\bib{SX2}{article}{
   author={Shiohama, K.},
   author={Xu, H.},
   title={Rigidity and sphere theorems for submanifolds II},
   journal={Kyushu J. Math.},
   volume={54},
   date={2000},
   pages={103--109},
}

\bib{Singer}{article}{
   author={Singer, M.A.},
   title={Positive Einstein metrics with small $L^{n/2}$-norm of the Weyl
   tensor},
   journal={Differential Geom. Appl.},
   volume={2},
   date={1992},
   number={3},
   pages={269--274},
   issn={0926-2245},
   review={\MR{1245327}},
   doi={10.1016/0926-2245(92)90014-E},
}

\bib{Vlachos}{article}{
   author={Th. Vlachos},
   title={Integral curvature and topological obstructions for submanifolds},
   journal={Geom. Dedicata},
   volume={166},
   date={2013},
   pages={289--305},
}

\end{biblist}
\end{bibdiv}

\bigskip

{\indent{\small\textsc{Department of Mathematics, University of Ioannina, 45110 Ioannina, Greece}}} \\
{\indent{\small {\it E-mail addresses:} \texttt{tvlachos@uoi.gr, chonti@cc.uoi.gr}}}

\end{document}